\theoremstyle{plain}
\newtheorem{theorem}{Theorem}[section]
\newtheorem{proposition}[theorem]{Proposition}         
\newtheorem{corollary}[theorem]{Corollary} 
\newtheorem{lemma}[theorem]{Lemma} 
\newtheorem{definition}[theorem]{Definition}   
\theoremstyle{remark}  
\newtheorem{example}[theorem]{Example}
\newcommand{\C}{\mathbb C}   
\newcommand{\Z}{\mathbb Z}
\renewcommand{\P}{\mathbb P}
\newcommand{\al}{\alpha}
\newcommand{\be}{\beta} 
\newcommand{\ga}{\gamma}
\newcommand{\de}{\delta}
\newcommand{\La}{\Lambda}
\newcommand{\Om}{\Omega}
\newcommand{\De}{\Delta}
\newcommand{\om}{\omega}
\DeclareMathOperator{\diag}{diag}
\DeclareMathOperator{\age}{age}
\DeclareMathOperator{\orbi}{orbi}
\newcommand{\calM}{\mathcal{M}}
\newcommand{\calF}{\mathcal{F}}
\newcommand{\sub}{\subseteq}  
\newcommand{\st}{\ \vert\ }   
\renewcommand{\ll}{\lq\lq}
\newcommand{\rr}{\rq\rq\ }
\newcommand{\rrr}{\rq\rq} 
\newcommand{\lan}{\langle}
\newcommand{\ran}{\rangle}  
\renewcommand{\b}{\partial}
\newcommand{\lann}{\langle\!\langle}
\newcommand{\rann}{\rangle\!\rangle}
\newcommand{\HO}{H_{\orbi}}
\newcommand{\PW}{\P(w)}
\newcommand{\one}{{\mathbf 1}}
\newcommand{\s}{{s}} 
\newcommand{\h}{ {\hbar} }
\begin{document}     


\title{Orbifold quantum D-modules associated to
weighted projective spaces}  

\author{Martin A. Guest and Hironori Sakai}      

\date{}

\maketitle 

\begin{abstract}
We construct in an abstract fashion (without using Gromov-Witten invariants)
the orbifold quantum cohomology of weighted projective space, starting from a certain differential operator.  We obtain the product, grading, and intersection form by making use of the associated self-adjoint D-module and the Birkhoff factorization procedure.  The method extends in principle to the more difficult case of Fano hypersurfaces in weighted projective space, where Gromov-Witten invariants have not yet been computed, and we illustrate this by means of an example originally studied by A.~Corti. In contrast to the case of weighted projective space itself or the case of a  Fano hypersurface in projective space, a \ll small cell\rr of the Birkhoff decomposition plays a role in the calculation.
\end{abstract}

\section{Introduction}\label{intro}

The  weighted projective space
\[
\P(w_0,\dots,w_n)=
\C^{n+1}-\{0\}\ /\ \C^\ast,
\ \ 
z\cdot(z_0,\dots,z_n)=(z^{-w_0}z_0,\dots,z^{-w_n}z_n)
\]
provides a simple test case 
(see \cite{CCLTXX}, \cite{BoMaPeXX}, \cite{MaXX})  
for the recently developed theories of orbifold cohomology and orbifold quantum cohomology.
Direct geometrical calculations are difficult, but mirror symmetry suggests an alternative and very effective approach:  Corti and Golyshev conjectured (see \cite{GoXX}, \cite{CoGoXX}) that the structure constants can be read off from 
\[
T_w-q=\prod_{i=0}^n (w_i \h\b) (w_i \h\b - \h)\dots
(w_i \h\b - (w_i-1)\h)\ -\ q,
\]
where $\b =q\frac{d}{d q}$; this is an ordinary differential operator of order $\s=\sum _{i=0}^n w_i$.

This generalizes the well known quantum differential equation of projective space $\C P^n=\P(1,\dots,1)$.   Namely, the equation
$((\h \b)^{n+1} - q)y=0$ is a scalar form of the system
\[
\h \b
\begin{pmatrix}
y_0 \\  \vdots\\ \vdots \\  y_n
\end{pmatrix}
=
\begin{pmatrix}
0 & & & q\\
1 & \ddots & & \\
  & \ddots  & \ddots & \\
  & & 1 & 0
\end{pmatrix}
\begin{pmatrix}
y_0 \\  \vdots \\ \vdots\\  y_n
\end{pmatrix}
\]
where the matrix is interpreted as that of quantum multiplication by the generator $p\in H^2\C P^n$ with respect to the standard cohomology basis $1,p,\dots,p^n$.   Thus,
\[
p\circ p^i =
\begin{cases}
p^{i+1}\quad\textrm{if}\ 0\le i < n\\
q\quad\textrm{if}\ i=n
\end{cases}
\]
from which all quantum products $p^i\circ p^j$ can be computed.

The conjecture of Corti and Golyshev was proved in
\cite{CCLTXX}, by extending to orbifold quantum cohomology a method of Givental for quantum cohomology.    The method has three steps.  First,  a basis of solutions of the quantum differential equation is written down --- the $I$-function.   Then,  the orbifold version of Givental's Mirror Theorem shows that the $I$-function is equal to the $J$-function, a certain generating function for Gromov-Witten invariants. 
This is the most substantial ingredient, but specific properties of weighted projective spaces are not required.
Finally, the structure constants for the orbifold quantum product are extracted from this $J$-function by a method which involves repeated differentiation.

The first goal of this paper is to give a straightforward version (alluded to in the introduction to \cite{CCLTXX}) of the last step, using the Birkhoff factorization method of 
\cite{Gu05}.  The simplifying feature is that we use the differential equation (D-module) directly,  rather than its solution ($I$-function).

The second goal is to study in its own right the differential operator $T_w-q$, or rather, the D-module $D^\h/(T_w-q)$ (where $D^\h$ is a certain ring of differential operators).  We show how to extract from this D-module an \ll abstract quantum cohomology ring\rr with a product operation, grading, and nondegenerate 
pairing. Then we observe that this coincides with the usual orbifold quantum cohomology.  It is remarkable that such a simple differential operator contains all relevant geometrical information, which is complicated and non-intuitive even in the case of $\P(w_0,\dots,w_n)$.  

The third and main goal (section \ref{hyper})
is to indicate how our method extends to hypersurfaces in weighted projective spaces.
This generalizes the method of \cite{Sa08} for hypersurfaces in projective spaces.  It presents a new feature:  instead of the \ll big cell\rr of the Birkhoff decomposition, in general a \ll small cell\rr is needed. Alternatively, this method can be interpreted as the Gram-Schmidt orthogonalization procedure together with a \ll big cell factorization\rrr. As a nontrivial example, we apply the method to a hypersurface of degree $3$ in $\P(1,1,1,2)$, where the orbifold quantum cohomology has been computed geometrically by Corti.  However, we are not able to give general conditions which ensure that our method works, and we must leave this as a problem for the interested reader. 

The first author is very grateful to Alessio Corti for explaining the conjecture and the basic ideas of orbifold quantum cohomology; the idea for extracting the structure constants of $\P(w_0,\dots,w_n)$ from the differential operator was originally worked out with him in 2006, 
and Alessio also explained the geometry behind the hypersurface example in section \ref{hyper}.  He also thanks Hiroshi Iritani for many essential explanations and comments on an earlier version, and Josef Dorfmeister for  discussions on the Birkhoff decomposition.

The authors apologise for the long delay in preparing the final version of this article since its submission to the arXiv in 2008.
Detailed comments and suggestions by the referee are gratefully acknowledged. 

\section{Notation for orbifold cohomology}\label{notation}

We write
$
\P(w_0,\dots,w_n)=\PW
$
from now on.  As far as possible we shall follow the notation of \cite{CCLTXX} for orbifold cohomology. That paper and its references contain more detailed information.

First, let
\begin{align*}
F&=\{  \tfrac{i}{w_j} \st  0\le i \le w_j-1, \ \ 
0\le j\le n \}\\
&=\{  
f_1,\dots,f_k
\}
\text{ where $0=f_1<f_2<\dots<f_k<f_{k+1}\overset{\text{def}}{=}1$ }.
\end{align*}
Let $u_1,\dots,u_k$ be the \ll multiplicities\rr of the fractions $f_1,\dots,f_k$ as elements of $F$.  
We write
\[
\s=u_1+\cdots+u_k=w_0+\cdots+w_n.
\]
The positive integer $u_i$ can also be described as the cardinality of the set
\[
S_{f_i}=\{  j \st w_j f_i \in \Z \}
\sub \{0,\dots,n\}.  
\]

The orbifold cohomology of $\PW$ may be defined as a vector space by
\[
\HO^\ast\, \PW = \bigoplus_{i=1}^k H^\ast \P(V_{f_i}),
\]
where 
\[
V_{f_i}=\{ (z_0,\dots,z_n)\in \C^{n+1} \st  z_j=0 \text{ if } j \notin S_{f_i} \} \cong \C^{u_i}. 
\]
This can be equipped with a commutative associative multiplicative operation called the orbifold cup product. Using this product, we obtain a $\C$-basis
\[
\one_{f_i},\one_{f_i}\, p, \dots, \one_{f_i}\, p^{u_i-1}
\]
of the subspace $H^\ast \P(V_{f_i})$,
where $p\in H^2 \P(V_0)$ and $\one_{f_i}$
denotes the canonical generator of $H^0 \P(V_{f_i})$. When $i=1$ we have $f_1=0$,
$u_1=n+1$, and generators
$\one_0,\one_0 p, \dots, \one_0 p^{n}$;  we shall  just write $1,p,
\dots,  p^{n}$ in this case. The element $1$ is the identity element of the
orbifold cohomology ring.

There is also a natural grading,
in which 
\[
\vert \one_{f_i}\, p^j\vert =
\vert \one_{f_i}\vert + \vert p^j\vert=
2\age \one_{f_i} + 2j.
\]
Here,
$\age \one_{f_i}
=(u_1+\cdots+ u_{i-1})-f_i \s
=\lan -w_0f_i\ran+\cdots+\lan -w_nf_i\ran$ where
$\lan r \ran = r-\max\{i\in\Z\st i\le r  \}$.  
The orbifold cohomology has a nondegenerate symmetric \ll intersection pairing\rr $(\ ,\ )$, which generalizes the Poincar\'e pairing for ordinary cohomology.
 
We record the following properties for later use.

\begin{lemma}\label{fandu}  $ $

\noindent(1) $f_i+f_j=1$ if $i+j=k+2$.

\noindent (2) $u_i=u_j$  if $i+j=k+2$. 

\noindent(3)  $u_2+\cdots+ u_i=u_{k+2-i}+\cdots+u_k$ for $2\le i\le k$.
\end{lemma}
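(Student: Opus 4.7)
The three claims all stem from a single symmetry: the multiset of positive elements of $F$ is invariant under the involution $f \mapsto 1 - f$. My plan is to establish this symmetry and then read off each assertion.

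To set up the symmetry, I would fix $j$ and observe that $i \mapsto w_j - i$ is a bijection of $\{1, \dots, w_j - 1\}$ that sends the fraction $i/w_j$ to $(w_j - i)/w_j = 1 - i/w_j$. Taking the disjoint union over $j = 0, \dots, n$, the multiset of positive fractions appearing in the definition of $F$ — whose distinct values are $f_2 < \dots < f_k$, occurring with multiplicities $u_2, \dots, u_k$ respectively — is preserved by $f \mapsto 1 - f$. Since this involution is order-reversing, the induced action on the supports is forced to be $f_i \leftrightarrow f_{k+2-i}$ for $2 \le i \le k$. This yields $f_i + f_{k+2-i} = 1$ (part (1) in the essential range; the boundary case $i = 1$, $j = k+1$ is immediate from the convention $f_{k+1} = 1$), and matching multiplicities under the involution gives $u_i = u_{k+2-i}$, which is part (2). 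Part (3) is then a direct corollary: summing $u_\ell = u_{k+2-\ell}$ for $\ell = 2, \dots, i$ and re-indexing the right-hand side by $m = k+2-\ell$ yields $\sum_{\ell=2}^i u_\ell = \sum_{m=k+2-i}^k u_m$.

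The argument is essentially a counting symmetry, so I anticipate no serious obstacle. The only point deserving care is the element $f_1 = 0$ (of multiplicity $n+1$): it has no partner inside $F$, because its would-be mate under the involution lies at the excluded value $1 = f_{k+1}$. This is precisely why parts (2) and (3) are stated only for $i \ge 2$, and why the edge case $i = 1$ of part (1) must be covered by the convention $f_{k+1} := 1$ rather than by the symmetry itself.
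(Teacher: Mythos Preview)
Your argument is correct and follows essentially the same route as the paper: the involution $f\mapsto 1-f$ and the fact that it reverses order force the claimed identities. The paper packages the boundary case slightly differently, working with $F\cup\{1\}=\{f_1,\dots,f_{k+1}\}$ so that the involution acts on the whole sequence at once, whereas you separate off $f_1=0$ and treat its partner $f_{k+1}=1$ by convention; but this is a cosmetic difference, and your explicit handling of multiplicities for part~(2) is a welcome elaboration of what the paper leaves to the reader.
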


\begin{proof}  The involution $f\mapsto 1-f$ preserves $F\cup\{1\}$.  It maps
$f_1<\dots<f_{k+1}$ to $ 1-f_{k+1}<\dots<1-f_1$, so these sequences must coincide. This proves (1), then  (2) and (3) follow immediately.
\end{proof}

\section{The structure constants: 
statement of results}\label{results}

As mentioned in the introduction, a key role is played by the 
$\s$-th order differential operator
\begin{align*}
T_w -q
&=\prod_{i=0}^n (w_i \h\b) (w_i \h\b - \h)\dots
(w_i \h\b - (w_i-1)\h)\ -\ q
\\
&=w^w
\h^\s \prod_{i=0}^n  
\b(\b-\tfrac{1}{w_i})\dots(\b - \tfrac{w_i-1}{w_i})
\ -\ q,
\end{align*}
where $\s=\sum _{i=0}^n w_i$, $w^w=\prod _{i=0}^n w_i^{w_i}$,  and $\b =q\frac{d}{d q}$.

In this section we state without explanation how the structure constants of orbifold quantum cohomology ---  in particular, of orbifold cohomology itself --- may be extracted from the differential operator $T_w -q$.  A systematic explanation will be given in the next section.

Using the formula $\b q^{-1} = q^{-1}(\b -1)$, we may factorize 
the differential operator $q^{-1}T_w$ in the following way:
\[
q^{-1}T_w= 
\underbrace{
m_k
q^{-\De_k}
(\h\b)^{u_k}
}_{ \text{$k$th factor} }
 \
 \underbrace{
m_{k-1}
q^{-\De_{k-1}}
(\h\b)^{u_{k-1}} 
}_{ \text{$k\!\!-\!\!1$th factor} }
\ 
\ \dots\  \ 
 \underbrace{
m_1
q^{-\De_1}
(\h\b)^{u_1}  
}_{ \text{$1$st factor} } 
\]
where 
\[
\De_i=f_{i+1}-f_i,\quad
m_i=\prod_{j\in S_{f_i}} w_j,
\]
for $1\le i \le k$.
Thus we have $\prod_{i=1}^k m_i = w^w$ and
$\sum_{i=1}^k \De_i=1$.  We shall need the following symmetry properties later on, which follow directly from Lemma \ref{fandu}: 

\begin{lemma}\label{Deandm} $ $

\noindent(1)  $\De_i=\De_j$  if $i+j=k+1$.  

\noindent(2) $m_i=m_j$  if $i+j=k+2$.  
\end{lemma}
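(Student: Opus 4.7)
The proof plan is to deduce both statements directly from Lemma~\ref{fandu}, which already encodes the key symmetry of the set $F$ under $f\mapsto 1-f$.

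For part (1), I would unpack $\Delta_i = f_{i+1} - f_i$ and set $j = k+1-i$. Then $(i+1) + (k+1-i) = k+2$ and $i + (k+2-i) = k+2$, so Lemma~\ref{fandu}(1) gives $f_{i+1} = 1 - f_{k+1-i}$ and $f_i = 1 - f_{k+2-i}$. Subtracting yields $\Delta_i = f_{k+2-i} - f_{k+1-i} = \Delta_{k+1-i} = \Delta_j$.

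For part (2), with $i+j = k+2$ Lemma~\ref{fandu}(1) gives $f_i + f_j = 1$. The point is then to show that the index sets $S_{f_i}$ and $S_{f_j}$ defined in Section~\ref{notation} coincide. Since for any $\ell \in \{0,\dots,n\}$ we have $w_\ell f_i \in \Z$ if and only if $w_\ell(1 - f_j) \in \Z$ if and only if $w_\ell f_j \in \Z$ (because $w_\ell$ itself is an integer), we conclude $S_{f_i} = S_{f_j}$, and hence $m_i = \prod_{\ell \in S_{f_i}} w_\ell = \prod_{\ell \in S_{f_j}} w_\ell = m_j$.

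There is no serious obstacle here: both assertions are formal consequences of the $f\mapsto 1-f$ involution on $F\cup\{1\}$ that was already exploited in the proof of Lemma~\ref{fandu}. The only mild care needed is to keep the index shifts straight (noting that $\Delta$ uses $i+j=k+1$ while $m$ uses $i+j=k+2$, reflecting the fact that $\Delta_i$ pairs consecutive fractions whereas $m_i$ is attached to a single fraction). One could alternatively phrase the whole lemma as the single statement that the factorization $q^{-1}T_w = \prod_{i=k}^{1} m_i q^{-\Delta_i}(\h\b)^{u_i}$ is palindromic up to the shift induced by the involution, but the elementwise derivation above is the cleanest.
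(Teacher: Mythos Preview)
Your proof is correct and follows exactly the approach the paper indicates: the paper simply asserts that Lemma~\ref{Deandm} ``follow[s] directly from Lemma~\ref{fandu}'' without writing out the details, and you have supplied precisely those details. There is nothing to add.
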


Let us rewrite the factorization above as
\[
q^{-1}T_w=
\tfrac{1}{r_\s} \ \h\b\ 
\tfrac{1}{r_{\s-1}} \ \h\b\ 
\ \ \dots\ \ 
\tfrac{1}{r_1} \h\b\ 
\]
where:

\begin{definition}\label{r}  For $1\le \al\le \s$, 
\[
r_\al=
\begin{cases}
\tfrac{1}{m_i} q^{\De_i}
\quad
\text{if $\al=u_1+\cdots+u_i$}
\\
\ \ 1\  
\quad\quad
\text{otherwise.} 
\end{cases}
\]
\end{definition}

\noindent The result of
\cite{CCLTXX} may be stated as follows:

\begin{theorem}\label{matrix}  Denote by $c_0,\dots,c_{\s-1}$ the additive basis
\[
1,p,\dots,p^{u_1-1};\ 
\one_{f_2},\one_{f_2}\, p, \dots, \one_{f_2}\, p^{u_2-1};
\ \ \dots\ ;\ \ 
\one_{f_k},\one_{f_k}\, p, \dots, \one_{f_k}\, p^{u_k-1}
\]
of $\HO^\ast\, \PW$.
Then the matrix of orbifold quantum multiplication by $p$ with respect to this basis
is given by 
\[
\begin{pmatrix}
0 & & & r_\s\\
r_1 & \ddots & & \\
 & \ddots & \ddots & \\
  & & r_{\s -1} & 0
  \end{pmatrix}.
\]
That is,  we have $p\circ c_i=r_{i+1}c_{i+1}$ for
$0\le i <\s-1$ and $p\circ c_{\s-1}=r_{\s}c_{0}$.  In particular, $p$ is a cyclic element of this ring.
\end{theorem}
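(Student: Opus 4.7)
The plan is to work inside the D-module $D^\h/(T_w-q)$. By the main result of \cite{CCLTXX} (proved via the orbifold mirror theorem), this D-module is the scalar form of the quantum D-module of $\PW$: it is cyclic with generator $[1]\leftrightarrow 1\in\HO^*\,\PW$, and the action of $\h\b$ on sections realises orbifold quantum multiplication by $p$. Under this correspondence the theorem asserts that $\h\b$ has a specific companion-type matrix in a basis tied to the orbifold decomposition, and this is exactly the kind of structure that the Birkhoff factorization method of \cite{Gu05} is designed to extract.

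The key observation is that the factorization $q^{-1}T_w=\tfrac{1}{r_\s}\h\b\cdots\tfrac{1}{r_1}\h\b$ is itself the required decomposition. First I would define sections iteratively by $c_0=[1]$ and $c_{i+1}=\tfrac{1}{r_{i+1}}\h\b\cdot c_i$ for $0\le i\le\s-2$. These automatically span the D-module (by cyclicity, since $D^\h/(T_w-q)$ has rank $\s$), and by construction $\h\b\cdot c_i=r_{i+1}c_{i+1}$, producing the sub-diagonal entries $r_1,\dots,r_{\s-1}$. For the upper-right entry $r_\s$, I would invoke the defining relation $T_w\cdot[1]=q\cdot[1]$, equivalently $q^{-1}T_w\cdot c_0=c_0$, and apply the $\s$ factors in succession to $c_0$; the computation collapses to $\tfrac{1}{r_\s}\h\b\cdot c_{\s-1}=c_0$, giving $\h\b\cdot c_{\s-1}=r_\s c_0$. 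Translating $\h\b\leftrightarrow p\circ$ yields the stated matrix in the basis $(c_0,\dots,c_{\s-1})$, and also immediately gives the cyclic-generator statement.

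The principal obstacle is to identify the inductively produced $c_i$ with the particular orbifold basis listed in the theorem. Within a block of consecutive indices where $r_\al=1$, the recursion is pure quantum multiplication by $p$: starting from a putative class $c_{u_1+\cdots+u_{i-1}}=\one_{f_i}$ it produces $\one_{f_i}\,p,\dots,\one_{f_i}\,p^{u_i-1}$ as required. At each transition index $\al=u_1+\cdots+u_i$ the factor $r_\al=\tfrac{1}{m_i}q^{\De_i}$ must absorb all the $q$-dependence so that $c_\al$ is the $q$-independent twisted class $\one_{f_{i+1}}$; equivalently one must establish the transition product $p\circ(\one_{f_i}\,p^{u_i-1})=\tfrac{1}{m_i}q^{\De_i}\one_{f_{i+1}}$. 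I would settle this step by combining the age/degree constraint of Section \ref{notation} (both sides carry weight $2(u_1+\cdots+u_i)-2f_i\s$, once $|q|=2\s$ is used) with the symmetries of Lemma \ref{Deandm}, which force the $c_i$'s to pair consistently with their duals under the intersection form and thereby pin down the constants $1/m_i$. This matching is the substantive part of the systematic derivation promised in Section \ref{direct}.
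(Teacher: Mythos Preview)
Your proposal is correct and matches the paper's approach in Section~\ref{direct}: the paper defines $P_0=1$, $P_i=\tfrac{1}{r_i}\h\b\, P_{i-1}$ exactly as you do, reads off the matrix $\hat\om$ of the theorem as the connection matrix with respect to this basis, checks that $|P_{u_1+\cdots+u_i}|=2\age\one_{f_{i+1}}$, and pins down the pairing via the self-adjointness argument (Proposition~\ref{sa}), where Lemma~\ref{Deandm} is used just as you anticipate. The only difference is packaging: rather than identifying the $c_i$ with pre-existing orbifold classes, the paper declares the $P_i$ to be a basis of an \emph{abstract} ring $A$ and then verifies that its grading and pairing agree with those of $\HO^\ast\PW$, with the final equality still resting on the mirror theorem of \cite{CCLTXX}, as you also invoke.
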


The orbifold structure constants (giving the product structure of 
$\HO^\ast\, \PW$) are obtained by setting $q=0$ in the above matrix.  Although the matrix itself gives only the products involving $p$,  all other products can be deduced.

\section{Direct approach from the D-module}\label{direct}

The structure constants in Theorem \ref{matrix} were computed in \cite{CCLTXX} from the $I$-function (i.e.\ solution of the differential equation $(T_w -q) y=0$) and by establishing
a mirror theorem in the style of Givental. 
In this section we discuss a somewhat different procedure:  
we construct \ll abstract orbifold quantum cohomology\rr
from $T_w -q$ itself.  To prove that our abstract orbifold quantum cohomology agrees with the usual orbifold quantum cohomology, it is still necessary to appeal to the mirror theorem, so in this sense our procedure relates only to the extraction of information from the differential equation.  However, our procedure gives a very direct way of obtaining the orbifold degrees and orbifold Poincar\'e pairing as well as the structure constants.

We follow \cite{Gu05} and chapter 6 of \cite{Gu08}, although the orbifold case presents some new features.  Let us consider the $D^\h$-module 
\[
\calM=
D^\h/(T_w -q)
\]
where $D^\h$ denotes the ring of (ordinary) differential operators generated by $\h\b$, and  $(T_w -q)$ denotes the left ideal generated by $T_w -q$.  As coefficient ring we can take the ring of functions which are polynomial in $q^{\pm 1/l}$, where
\[
l=\text{lowest common multiple of $w_0,\dots,w_n$},
\]
and which are holomorphic in 
$\h$ in a neighbourhood of $\h=0$. 

The $D^\h$-module $\calM$ is free of rank $\s$ over the coefficient ring. With respect to the natural basis $1,\h\b,\dots,(\h\b)^{\s-1}$,  the matrix of the action of $\b$ is of the form
\[
\Om=\tfrac1\h\om+\theta^{(0)}+\h\theta^{(1)}+\h^2\theta^{(2)}+\cdots .
\]
More precisely, if we identify $\calM$ with a space of meromorphic sections of the trivial bundle, we may regard $1,\h\b,\dots,(\h\b)^{\s-1}$ as a local basis of sections, and
the action of $\b$ on $\calM$ defines a connection on the bundle, with local connection matrix $\Om$.

If we replace $\h\b$ by an abstract (commutative) variable $p$, then set $\h=0$, we may construct from $\calM$ a commutative ring generated by $p$ which is subject to the relation
$w^w p^\s - q$, and which has $1,p,\dots,p^{\s-1}$ as an additive 
$\C[q^{\pm1/l}]$-basis.  That is, we have the \ll abstract orbifold quantum cohomology ring\rr
\[
QA=\C[p,q^{\pm1/l}]/(w^w p^\s - q).
\]
In order to define \ll abstract orbifold Gromov-Witten invariants\rr (structure constants) we shall introduce a ring $A$, the \ll abstract orbifold cohomology ring\rrr, such that $QA$ and $A\otimes \C[q^{\pm1/l}]$ are isomorphic as 
$\C[q^{\pm1/l}]$-modules.  A choice of basis will give a specific isomorphism $\de: QA\cong
A\otimes \C[q^{\pm1/l}]$, hence a new
$A\otimes \C[q^{\pm1/l}]$-valued  product operation
\[
a\circ b=\de\left(  \de^{-1}(a)  \de^{-1}(b) \right)
\]
on $A$.
Our main task will be the construction of a basis such that the product has the expected properties of the orbifold quantum product.

For this, the fundamental step is to transform $1,\h\b,\dots,(\h\b)^{\s-1}$ to a new basis, with respect to which the connection matrix has the form
\[
\hat\Om =\tfrac1\h\hat\om
\]
where $\hat\om$ is independent of $\h$.
In the case of a Fano manifold, the transformation procedure is explained in detail in chapter 6 of \cite{Gu08}.  It involves a Birkhoff factorization $L=L_-L_+$ of a matrix-valued function $L$ such that
$\Om=L^{-1}dL$, after which one defines $\hat\Om=(L_-)^{-1}dL_-$.  
The basis $1,\h\b,\dots,(\h\b)^{\s-1}$ is transformed to
the new basis
$L_+^{-1}\cdot 1, L_+^{-1}\cdot\h\b,\dots, L_+^{-1}\cdot(\h\b)^{\s-1}$, where  $L_+^{-1}\cdot(\h\b)^{i}$ means $\sum_{j=0}^{s-1} (L_+)^{-1}_{ji} (\h\b)^{j}$.  

In general it is difficult to carry out such Birkhoff factorizations explicitly, or even to know whether they exist.  Here, however, $L_+$ can be found by the method of \cite{AmGu05}, \cite{Gu08}.  The effectiveness of our approach comes from the fact that only the factor $L_+$ is needed (not the more complicated factor $L_-$, which is equivalent to the $I$-function).  

In the case of weighted projective spaces themselves (though not for hypersurfaces), the differential operator factorization given in section \ref{results} provides a short cut for the computation of $L_+$.  Namely,  we introduce directly a new basis $P_0,\dots,P_{\s-1}$ by defining
\[
\text{
$P_0=1$ and
$P_i=\frac{1}{r_i} \h\b P_{i -1}$
}
\]
for $1\le i \le \s-1$.  Fortuitously, with respect to this basis, the matrix of $\b$ already has the form $\tfrac1\h\hat\om$, so $L_+$ may be read off by regarding the above basis as $L_+^{-1}\cdot 1, L_+^{-1}\cdot\h\b,\dots, L_+^{-1}\cdot(\h\b)^{s-1}$.  We have
$
L_+=Q_0(I + \h Q_1 + \cdots + \h^{k-2} Q_{k-2})
$
where
\[
Q_0=
\begin{pmatrix}
\frac{1}{m_0} q^{f_1} I & & & \\
 & \frac{1}{m_0m_1} q^{f_2} I & & \\
  & & \ddots & \\
   & & & \frac{1}{m_0\dots m_{k-1}} q^{f_k} I
\end{pmatrix},
\]
$m_0=1$, and where  $Q_1,\dots,Q_{k-2}$ are (easily computed) constant matrices.  

For future reference, we explain how (a modification of) the algorithm of \cite{AmGu05} and section 6.6 of \cite{Gu08} produces this answer.  First, by definition, the factor $L_+(q,\h)=Q_0(q)(I + \h Q_1(q) + \h^{2} Q_{2}(q)+\cdots)$ satisfies
the ordinary differential equation
\[
\tfrac1\h \hat\om \ (=\tfrac1\h Q_0\om Q_0^{-1}) = L_+ \Om L_+^{-1} + L_+ dL_+^{-1}.
\]
In the situation of \cite{AmGu05} and \cite{Gu08}, $L_+$ is determined uniquely by the initial condition $L_+\vert_{q=0}=I$, and there is a natural homogeneity condition on $L_+$ which reduces the computation of $L_+$ to a finite algebraic algorithm.   
The present situation is similar, but $L_+$ must be normalized in a different way.  

Let us make the Ansatz that $Q_0$ is of the above diagonal form. This is natural as 
$\diag(q^{f_1}I,\dots,q^{f_k}I)$ arises from the Frobenius method for solving the original o.d.e.,
and, as we shall see in Corollary \ref{metric} below, the coefficients $\frac{1}{m_0},\dots,\frac{1}{m_0\dots m_{k-1}}$ have the effect of producing the \ll expected\rr pairing matrix
\[
\left(
\begin{array}{c|ccc}
\rule{0ex}{2.6ex}
\vphantom{M_{M_{M_M}}}  
 M_1^{-1} & & &  \\
\hline
\rule{0ex}{2.6ex}
 & & & M_2^{-1} \\ 
 \rule{0ex}{2.6ex}
  & & \iddots & \\ 
  \rule{0ex}{2.6ex}
   & M_{k}^{-1}   
\end{array}
\right),
\quad
M_i=
\begin{pmatrix}
 & & m_i \\
 & \iddots & \\
 m_i & &
 \end{pmatrix}.
\]
Furthermore, let us assume that each $Q_i$ is homogeneous and polynomial in $q^{1/l}$. Then the differential equation again reduces to a system of algebraic equations for $Q_1, Q_2,\dots$ and it is easy to show that there is a unique solution.  

We shall use the above basis $P_0,\dots,P_{\s-1}$ to construct in turn a product operation, a grading, and a pairing.

\noindent{\em 1. The product}

Let us group the basis elements of $\calM$ as follows:
\begin{gather*}
(\h\b)^i\quad\text{for}\quad 0\le i\le u_1-1
\\
(\h\b)^i m_1 q^{-\De_1}(\h\b)^{u_1}     
\quad\text{for}\quad 0\le i\le u_2-1
\\
\dots
\\
(\h\b)^i 
m_1 q^{-\De_1}(\h\b)^{u_1} 
\dots\ 
m_{k-1} q^{-\De_{k-1}}(\h\b)^{u_{k-1}-1}     
\quad \text{for}\quad
0\le i\le u_k-1
\end{gather*}
Replacing $\h\b$ by $p$ here, and introducing the notation
\[
\one_{f_{i+1}}=
m_1\dots m_i  q^{-\De_1-\cdots-\De_i} 
p^{u_1+\cdots+u_i}
\]
we obtain a corresponding basis 
\begin{gather*}
1,\  
p,\ 
\dots,\ 
p^{u_1-1}
;
\\
\one_{f_2},\ \  
\one_{f_2} p,\ \ 
\dots,\ \ 
\one_{f_2} p^{u_2-1};
\\
\vdots\\
 \one_{f_k},\ \
 \one_{f_k} p,\ \ 
 \dots, \ \
 \one_{f_k} p^{u_k -1}
\end{gather*}
of $QA$. The vector space spanned (over $\C$) by these basis elements will be denoted $A$.
By definition, the action of $p$ on $A\otimes\C[q^{\pm1/l}]$ is given (with respect to this basis) by the matrix of Theorem
\ref{matrix}.  As $1$ is a cyclic element, this action extends to a product operation on $A\otimes\C[q^{\pm1/l}]$, that is, it allows us to define the product of any two elements
$\one_{f_i}p^j,   \one_{f_k}p^l$.  
We denote this product by
$\one_{f_i}p^j  \circ   \one_{f_k}p^l$, and regard $A\otimes\C[q^{\pm1/l}]$ as the abstract orbifold quantum cohomology ring of $\P(w)$.  We obtain a subring $A\otimes\C[q^{1/l}]$, and by putting $q=0$ we obtain a product operation on $A$, which we regard as the abstract orbifold cohomology.

\noindent{\em 2. The grading}

The differential operator $T_w -q$ is homogeneous of weight $2\s$,  if
we assign weights as follows:    
$\vert \h\vert=2$,
$\vert \b\vert=0$,
$\vert q\vert=2\s$.
The differential operators $P_0,\dots,P_{\s-1}$ are also homogeneous. Indeed, from the formula for  $P_{u_1+\cdots+u_i}$, its weight is
\begin{align*}
\vert P_{u_1+\cdots+u_i}\vert &= 
2(u_1+\cdots+u_i) - 2\s(\De_1+\cdots+\De_i)\\
&=
2(u_1+\cdots+u_i) - 2\s f_{i+1}\\
&=
2\age \one_{f_{i+1}}.
\end{align*}
It follows that our product operation satisfies
\[
\vert \one_{f_i}p^j  \circ   \one_{f_k}p^l \vert=
\vert \one_{f_i}p^j\vert +\vert  \one_{f_k}p^l \vert
\]
and $\vert\ \ \vert$ coincides with the usual orbifold quantum cohomology grading.

\noindent{\em 3. Self-adjointness and the pairing}

We shall obtain a natural identification of the $D^\h$-module 
$\calM=D^\h/(T_w -q)$ with a \ll dual\rr $D^\h$-module;  this will give us a  pairing on $\calM$, and a nondegenerate symmetric $\C[q^{\pm1/l}]$-linear pairing on $A\otimes\C[q^{\pm1/l}]$.  This pairing will turn out to be a $\C[q^{\pm1/l}]$-linear extension of a $\C$-linear pairing on $A$. We shall use the notation of section 6.3 of 
\cite{Gu08}.  

First, the $D^\h$-module $\calM^\ast$ is defined to be the space  of $\calF$-module homomorphisms $\calM\to \calF$, where $\calF$ is the coefficient ring. The $D^\h$-module structure is given by
\[
(\h\cdot \pi)(P) = \h\pi(P),\quad
(\b\cdot \pi)(P) = -\pi(\b\cdot P) +
q\tfrac{\b}{\b q}\pi(P)
\]
for $\pi\in \calM^\ast$.  

Next, we denote by $\bar{\calM}^\ast$ the $D^\h$-module obtained from $\calM^\ast$ by reversing the sign in the action of $\h$.  That is, $\bar{\calM}^\ast=\calM^\ast$ 
(as $\calF$-modules), but with action of $D^\h$ derived in the obvious way from
$\h\odot \pi =-\h \pi$, 
$\b\odot \pi =
\b\cdot \pi$.

Let $\de_0,\dots,\de_{\s-1}$ be the basis of $\bar{\calM}^\ast=\calM^\ast$ (over $\calF$) which is dual to the basis $1,\h\b,\dots,(\h\b)^{\s-1}$ of $\calM$.  The key technical result we need is:

\begin{proposition}\label{sa}  $ $

\noindent (1)  $\de_n$ is a cyclic element of
$\bar{\calM}^\ast$ (that is, $D^\h\odot\de_n=
\bar{\calM}^\ast$).

\noindent(2) $(T_w -q)\odot\de_n=0$. 
 
\noindent(3) The map $\calM\to \bar{\calM}^\ast$,
$[P]\mapsto [P\odot\de_n]$ is an isomorphism
of $D^\h$-modules.
\end{proposition}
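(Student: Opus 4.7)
The plan is to establish (1), (2), (3) in order: (1) and (2) are computational, while (3) follows formally. The workhorse is the formula $(\h\b \odot \pi)(P) = \pi(\h\b P) - \h\b \pi(P)$, obtained by unwinding the definitions $\h \odot \pi = -\h\pi$ and $(\b \odot \pi)(P) = -\pi(\b P) + \b\pi(P)$. Iterating this formula gives
\[
((\h\b)^k \odot \pi)(P) = \sum_{j=0}^{k} (-1)^j \binom{k}{j} (\h\b)^j \, \pi((\h\b)^{k-j}P),
\]
which I would apply throughout with $\pi = \de_n$ and $P = e_i := (\h\b)^i$ ranging over the basis, $0 \le i \le \s-1$.

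For (1), I would evaluate $(\h\b)^k \odot \de_n$ in the basis $\{\de_j\}$. Since $\de_n(e_j) = \delta_{n,j}$, and since $(\h\b)^m \cdot e_i = e_{m+i}$ for $m+i \le \s-1$ with wrap-around controlled by $(\h\b)^\s = w^{-w}(q - \text{lower terms})$ obtained from $T_w - q = 0$, the transition matrix from $\{\de_j\}$ to $\{(\h\b)^k \odot \de_n\}_{k=0}^{\s-1}$ reduces modulo $\h$ to a permutation matrix (up to $q$-factors from the wrap-around). Invertibility of this leading term over $\calO$ gives cyclicity.

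Statement (2), the annihilation $(T_w - q)\odot \de_n = 0$, is the main obstacle. I would write $T_w = \sum_k c_k\h^{\s-k}(\h\b)^k$ with $c_\s = w^w$, and compute $((T_w - q)\odot \de_n)(e_i)$ directly: after accounting for the flip $\h\odot = -\h\cdot$ in the coefficients and applying the iterated formula, the cancellations required for vanishing are governed by the palindromic structure of $T_w = w^w \prod_l (\h\b - f_l\h)^{u_l}$ together with the symmetries $f_l + f_{k+2-l} = 1$ and $u_l = u_{k+2-l}$ from Lemma \ref{fandu}. These conspire to produce the \emph{self-adjointness} alluded to in the title of this subsection, which makes the annihilation exact. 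The delicate part is tracking three families of coefficients simultaneously --- the $c_k$'s from $T_w$, the binomials $\binom{k}{j}$ from the iterated action, and the $q$-factors from wrap-around reductions using $T_w - q = 0$ in $\calM$ --- and verifying they combine to leave only the single surviving term $q\de_n(e_i) = q\delta_{n,i}$.

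For (3), given (1) and (2), the $D^\h$-linear map $D^\h \to \bar{\calM}^\ast$, $P \mapsto P\odot \de_n$, factors through $\calM = D^\h/(T_w - q)$ by (2) and is surjective by (1). Since both $\calM$ and $\bar{\calM}^\ast$ are free $\calO$-modules of rank $\s$, the surjective $\calO$-linear map between them must also be injective, and so the induced map $\calM \to \bar{\calM}^\ast$ is an isomorphism of $D^\h$-modules.
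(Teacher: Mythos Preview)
Your argument for (3) is fine and matches the paper's. Your sketch for (1) is also essentially correct: modulo $\h$, only the $j=0$ term in your iterated formula survives (since $(\h\b)^j$ applied to a constant vanishes for $j\ge 1$), and one checks that $(\h\b)^k\odot\de_n \equiv \de_{n-k}$ for $0\le k\le n$ and $(\h\b)^k\odot\de_n \equiv w^{-w}q\,\de_{\s+n-k}$ for $n+1\le k\le \s-1$, giving an invertible (block anti-diagonal) transition matrix.

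The gap is in (2). What you have written is a plan, not a proof: you identify the three families of coefficients that must interact, invoke the symmetries of Lemma~\ref{fandu}, and assert that they ``conspire'' to give the required cancellation. But you never actually carry out this cancellation, and in the naive basis $e_i=(\h\b)^i$ it is genuinely unpleasant: the coefficients $c_k$ of $T_w=\sum_k c_k\h^{\s-k}(\h\b)^k$ are signed elementary symmetric functions in the $f_l$ (with multiplicities $u_l$), and these must be matched against binomials and against the reduction of $e_m$ for $m\ge \s$ via the relation. Nothing here is obviously palindromic at the level of the $c_k$, and the needed identity is not displayed.

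The paper avoids this entirely by working in the basis $P_0,\dots,P_{\s-1}$ already constructed from the factorization $q^{-1}T_w=\frac{1}{r_\s}\h\b\cdots\frac{1}{r_1}\h\b$, together with its dual $P_0^\ast,\dots,P_{\s-1}^\ast$. In that basis the actions are simply
\[
\h\b\, P_\al = r_{\al+1}P_{\al+1},\qquad \h\b\odot P_\al^\ast = r_\al P_{\al-1}^\ast,
\]
so no binomial expansion is needed. One then proves by a short induction the single closed formula
\[
P_\al\odot\de_n=\tfrac{m_1}{m_{i+1}}\,P^\ast_{\s+n-\al}\quad\text{for }u_1+\cdots+u_i\le\al<u_1+\cdots+u_{i+1},
\]
with the boundary cases $P_\al\odot\de_n=\de_{n-\al}$ for $0\le\al\le n$ and $P_\s\odot\de_n=\de_n$. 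The symmetries enter in a controlled, one-line way at each inductive step: one needs only $r_{\s+n-\al+1}=\frac{1}{m_{i+1}}q^{\De_i}$, which follows from $u_1+\cdots+u_{k+1-i}=\s+n-\al+1$ (Lemma~\ref{fandu}) and $\De_{k+1-i}=\De_i$, $m_{k+1-i}=m_{i+1}$ (Lemma~\ref{Deandm}). Statements (1) and (2) are then immediate consequences of this one formula, since $P_\s=q^{-1}T_w$.

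In short: your route through the naive basis forces a three-way combinatorial identity that you do not prove, whereas the paper's choice of the $P_\al$ basis reduces (1) and (2) to a transparent shift computation governed directly by the $r_\al$ and their symmetries.
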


\noindent It should be noted that the operator $T_w -q$ is
self-adjoint only in the special case $\P(w)=\C P^n$, even though $\calM=D^\h/(T_w -q)$
is always a self-adjoint $D^\h$-module.

\begin{proof}   Let $P_0^\ast,\dots,P_{\s-1}^\ast$ be the basis of $\bar{\calM}^\ast$ which is dual to $P_0,\dots,P_{\s -1}$.  For readability we shall omit square brackets throughout this proof.  Note that $P_i^\ast=\de_i$ for $i=0,\dots,n$.

We claim that
\[
P_\al\odot\de_n=
\begin{cases}
P^\ast_{n-\al}=\de_{n-\al}\ \text{when $0\le\al<u_1=n+1$},
\\
\tfrac{m_1}{m_{i+1}}  
P^\ast_{\s+n-\al}\ \text{when $u_1+\cdots+u_i\le\al<u_1+\cdots+u_{i+1}$},
\\
P^\ast_n=\de_n \ \text{when $\al=\s$ (we define $P_s$ below)}.
\end{cases}
\]
Assuming this, the first two formulae (for $\al=0,\dots,\s-1$) prove (1).   
In the third formula $P^\ast_n=\de_n$,  $P_\s$ means 
$\tfrac{1}{r_\s} \h\b 
\tfrac{1}{r_{\s-1}}  \h\b 
\dots
\tfrac{1}{r_1} \h\b$, which is $q^{-1}T_w$,
so this gives (2).  The third statement is an immediate consequence of (1) and (2) (cf.\  section 6.3 of 
\cite{Gu08}).

To prove the claim, we shall make use of
\begin{gather*}
\h\b P_\al = r_{\al+1} P_{\al+1}
\quad\quad\quad\quad(\ast)
\\
\h\b \odot P^\ast_\al = r_\al P^\ast_{\al-1}
\quad\quad\quad\quad(\ast\ast)
\end{gather*}
and the value of $r_\al$ given in Definition \ref{r}.

\noindent{\it The case $0\le \al < u_1=n+1$.}  

Since $r_0=\cdots=r_{n}=1$,  from $(\ast\ast)$ we have
$P_\al\odot\de_n = (\h\b)^\al \odot P^\ast_n =
P^\ast_{n-\al}$.

\noindent{\it The case $u_1+\cdots+u_i\le\al<u_1+\cdots+u_{i+1}$.}  

We shall prove this by induction on $i=0,1,\dots,k-1$ (regarding  the previous case as $i=0$).  

\noindent(i) If $\al=u_1+\cdots+u_i$ for some $i\ge 1$, we have
\begin{align*}
P_\al \odot\de_n &= m_i q^{-\De_i} \h\b P_{\al-1}\odot \de_n
\quad\text{by $(\ast)$, as $r_\al=m_i^{-1} q^{\De_i}$}
\\
&= 
m_i q^{-\De_i} \h\b \odot \tfrac{m_1}{m_i} 
P^\ast_{\s+n-(\al-1)}
\quad\text{(inductive hypothesis)}
\\
&=
m_1 q^{-\De_i}r_{\s + n -\al + 1}P^\ast_{\s+n-\al}
\quad\text{by $(\ast\ast)$}.
\end{align*}
Now, $\s+n-\al+1=\s + u_1 -(u_1+\cdots+u_i)
=\s - (u_{k+2-i}+\cdots+u_k)$ (by  Lemma \ref{fandu})
$= u_1+\cdots+u_{k+1-i}$.  (This argument applies only if $i\ge 2$, but the case $i=1$ is obvious.) Hence
\[
r_{\s + n -\al + 1}=r_{u_1+\cdots+u_{k+1-i}}
=\tfrac 1 {m_{k+1-i} } q^{\De_{k+1-i} }
= \tfrac 1 {m_{i+1} } q^{\De_{i} }
\]
by  Lemma \ref{Deandm}.  We obtain
$P_\al\odot \de_n =\tfrac{m_1}{m_{i+1}} P^\ast_{\s + n-\al}$.

\noindent(ii) If $u_1+\cdots+u_i\le\al<u_1+\cdots+u_{i+1}$ for some $i$, then
\begin{align*}
P_\al \odot\de_n &= \h\b P_{\al-1}\odot \de_n
\quad\text{by $(\ast)$, as $r_\al=1$}
\\
&= 
\h\b \odot \tfrac{m_1}{m_{i+1}} 
P^\ast_{\s+n-(\al-1)}
\quad\text{(inductive hypothesis)}
\\
&=
\tfrac{m_1}{m_{i+1}}  r_{\s + n -\al + 1}P^\ast_{\s+n-\al}
\quad\text{by $(\ast\ast)$}.
\end{align*}
Here we have 
$\s+n-\al+1=u_1+\cdots+u_{k+1-i} - l$
with $0<l<u_{i+1} = u_{k+1-i}$ (from Lemma \ref{fandu}),
so $r_{\s + n -\al + 1}=1$. 
We obtain
$P_\al\odot \de_n =\tfrac{m_1}{m_{i+1}} P^\ast_{\s + n-\al}$ again.

\noindent{\it The case $\al=\s$.}  

We have
\begin{align*}
P_\s \odot\de_n &= m_k q^{-\De_k} \h\b P_{\s-1}\odot \de_n
\quad\text{by $(\ast)$}
\\
&= 
m_k q^{-\De_k} \h\b \odot \tfrac{m_1}{m_k} 
P^\ast_{n+1}
\quad\text{(inductive hypothesis)}
\\
&=
m_1 q^{-\De_k}r_{n + 1}P^\ast_{n}
\quad\text{by $(\ast\ast)$}.
\end{align*}
Here we have $r_{n+1}=r_{u_1}=\tfrac1{m_1} q^{\De_1}$, and $\De_1=\De_k$ by Lemma \ref{Deandm}, so we conclude that $P_\s\odot \de_n=\de_n$.
\end{proof}

The natural composition
$\calM\times\calM\to
\bar{\calM}^\ast\times \calM\to \calF$, making use of the above isomorphism $\calM\to \bar{\calM}^\ast$, defines a pairing.  We normalize it as follows:

\begin{definition}\label{pairing}
$\lann P , Q\rann=
\tfrac1{w_0\dots w_n}(P\odot\de_n)(Q)
\ (=
\tfrac1{m_1}(P\odot\de_n)(Q)
)$.
\end{definition}

\begin{corollary}\label{metric} We  have (from the formula for 
$P_\al\odot\de_n$ in the proof of Proposition \ref{sa})
\[
\lann P_\al ,P_\be \rann=
\begin{cases}
\tfrac1{m_1}\de_{n-\al,\be}\ 
\text{if $0\le\al<u_1$},
\\
\tfrac{1}{m_{i+1}}  
\de_{\s+n-\al,\be}\ \text{if $u_1+\cdots+u_i\le\al<u_1+\cdots+u_{i+1}, i\ge 1$}.
\end{cases}
\]
\end{corollary}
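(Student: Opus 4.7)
The plan is to reduce the Corollary directly to the formulas for $P_\al \odot \de_n$ established inside the claim in the proof of Proposition \ref{sa}, and then evaluate on $P_\be$ via duality.

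First I would unwind the normalization: by Definition \ref{pairing},
\[
\lann P_\al , P_\be \rann = \tfrac{1}{m_1}\,(P_\al \odot \de_n)(P_\be),
\]
so it suffices to compute the right-hand side for $0 \le \al, \be \le \s-1$. The key observation is that the basis $P^\ast_0, \dots, P^\ast_{\s-1}$ of $\bar{\calM}^\ast$ is, by construction, dual to $P_0, \dots, P_{\s-1}$, so $P^\ast_\ga(P_\be) = \de_{\ga,\be}$.

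Next I would simply substitute the two relevant cases of the claim from the proof of Proposition \ref{sa}. In the range $0 \le \al < u_1$ we have $P_\al \odot \de_n = P^\ast_{n-\al}$, hence $(P_\al \odot \de_n)(P_\be) = \de_{n-\al,\be}$; dividing by $m_1$ gives the first line of the Corollary. In the range $u_1 + \dots + u_i \le \al < u_1 + \dots + u_{i+1}$ with $i \ge 1$, the claim gives $P_\al \odot \de_n = \tfrac{m_1}{m_{i+1}}\,P^\ast_{\s+n-\al}$, and evaluation on $P_\be$ yields $\tfrac{m_1}{m_{i+1}}\,\de_{\s+n-\al,\be}$; dividing by $m_1$ produces the second line.

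There is really no obstacle beyond this bookkeeping, since all the substantive combinatorics — in particular the symmetry inputs from Lemmas \ref{fandu} and \ref{Deandm} that are needed to identify $r_{\s+n-\al+1}$ — has already been absorbed into Proposition \ref{sa}. The only minor point to note is that the third case of the claim (the case $\al = \s$) is not used here, because the Corollary concerns only the pairings of the $\s$ basis elements $P_0, \dots, P_{\s-1}$.
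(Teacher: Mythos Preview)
Your proposal is correct and is exactly the argument the paper has in mind: the Corollary is placed immediately after Definition~\ref{pairing} precisely because it follows by plugging the claim from the proof of Proposition~\ref{sa} into $\lann P_\al,P_\be\rann=\tfrac1{m_1}(P_\al\odot\de_n)(P_\be)$ and using $P^\ast_\ga(P_\be)=\de_{\ga,\be}$. There is nothing to add.
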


With this normalization,
the induced pairing on $A$ agrees with the usual Poincar\'e intersection pairing on the cohomology of $\P(w)$; it is known from \cite{Ka73} that $(1,p^n)=1/(w_0\dots w_n)$.  The induced pairing on 
 $A\otimes \C[q^{\pm1/l}]$ satisfies the Frobenius property
(see section 6.5 of  \cite{Gu08}).  Hence, by the cyclic property, it agrees with the orbifold quantum Poincar\'e intersection pairing. 
 
 This concludes our construction of an abstract orbifold quantum product, grading, and pairing directly from $T_w-q$, and our verification that they agree with the usual ones.

\begin{example}\label{example}  $\P(1,2,3)$

We have $w_0=1, w_1=2, w_2=3$ and $\s=1+2+3=6$.
The differential operator is
\begin{align*}
T_w-q&= \h\b\ 2\h\b(2\h\b-\h) \
3\h\b(3\h\b-\h)(3\h\b-2\h) -q
\\
&=2^23^3\h^6\b^3(\b-\tfrac13)(\b-\tfrac12)(\b-\tfrac23)-q.
\end{align*}
This has order $6$, and it is homogeneous of weight 
$12$, where $\vert\h\vert=2$, $\vert q\vert=12$.

We have
$F=\{ \frac01, \frac02, \frac12, \frac03, \frac13, \frac23 \}
=
\{ 0, \frac13,  \frac12,  \frac23 \}$,   so
$u_1=3, u_2=1, u_3=1, u_4=1$.  It is convenient to display all relevant data in the following diagram:
\[
\begin{array}{r|c|c|c|c}
\vphantom{\dfrac AA}
 &w_0=1 & w_1=2 & w_2=3
\\
\hline
\vphantom{\dfrac AA}
S_{f_1}=\{0,1,2\},\  f_1=0 
& \frac01  &  \frac02  &  \frac03  
&\De_1=\frac13, m_1=6
\\
\hline
\vphantom{\dfrac AA}
S_{f_2}=\{2\},\  f_2=\frac13 
& & & \frac13
&\De_2=\frac16, m_2=3
 \\
 \hline
 \vphantom{\dfrac AA}
S_{f_3}=\{1\}, \ f_3=\frac12 
&  & \frac12 & 
&\De_3=\frac16, m_3=2
  \\
  \hline
  \vphantom{\dfrac AA}
S_{f_4}=\{2\}, \ f_4=\frac23 & & & \frac23
&\De_4=\frac13, m_4=3
 \\
 \hline
\end{array}
 \]
 \[
 {}
 \]
In the central $4\times 3$ block,  the number of entries in the $i$th row is $u_i$,  and the number of entries in the $j\!\!+\!\!1$th column is $w_{j}$.
 
 The factorization is
 \[
q^{-1} T_w=
3 q^{-\frac13} (\h\b)^{1}
2 q^{-\frac16} (\h\b)^{1}
3 q^{-\frac16} (\h\b)^{1}
6 q^{-\frac13} (\h\b)^{3}.
\]
The bases of $\calM$ and $A$ constructed above are:
\begin{align*}
&1, \h\b, (\h\b)^2
&
&1,\ p,\ p^2
\\
&6 q^{-\frac13} (\h\b)^3
&
&\one_{\frac13}
\\
&3 q^{-\frac16} (\h\b) \  6 q^{-\frac13} (\h\b)^3
&
&\one_{\frac12}
\\
&2 q^{-\frac16} \h\b \  3 q^{-\frac16} \h\b \ 6 q^{-\frac13} (\h\b)^3
&
&\one_{\frac23}
\end{align*}
The matrix of structure constants (quantum multiplication by $p$) with respect to this basis is
\newcommand{\hp}{ \hphantom{qq} }
\[
\begin{pmatrix}
\ 0\  &  &  &  &  & \!\frac13q^{\frac13}
 \\
1  & \ 0\  & & & &
 \\
  & 1 & 0 & & &
 \\
  & & \frac16 q^{\frac13} & 0 & &
 \\
  & & & \frac13 q^{\frac16} &0 &
 \\
  & & & & \frac12 q^{\frac16} & 0
\end{pmatrix}.
\]
These products 
determine all others, and we obtain the following orbifold quantum multiplication table:
\renewcommand{\AA}{\tfrac16 q^{\tfrac {A^A}A} \one_{\tfrac2{A_A}}}
\[
\begin{array}{c|cccccc}
\vphantom{\AA}
 & \ \ 1\ \  & \ \ p\ \  & p^2 & \one_{\frac13}
 & \one_{\frac12} & \one_{\frac23}
\\
\hline
\vphantom{\AA}
1  & 1 & p & p^2 & \one_{\frac13}
 & \one_{\frac12} & \one_{\frac23}
 \\
\vphantom{\AA}
p  &  & p^2 & \frac16 q^{\frac13} \one_{\frac13} & \frac13 q^{\frac16} \one_{\frac12} & \frac12 q^{\frac16} \one_{\frac23} & \frac13 q^{\frac13}  
\\
\vphantom{\AA}
p^2  &  &  & \frac1{18} q^{\frac12} \one_{\frac12} & \frac16 q^{\frac13} \one_{\frac23} & \frac16 q^{\frac12}  & \frac13 q^{\frac13}  p
\\
\vphantom{\AA}
\one_{\frac13}  &  &  &  & \frac13 q^{\frac13}& 
q^{\frac16}p  & 2p^2
\\
\vphantom{\AA}
\one_{\frac12}  &  &  &  & & 
3p^2  & q^{\frac16}\one_{\frac13}
\\
\vphantom{\AA}
\one_{\frac23}  &  &  &  & & 
  & \frac23 q^{\frac16}\one_{\frac12}
\\
\end{array}
 \]
Orbifold cohomology products are obtained by setting $q=0$ in this table.  Note that $p$ generates the orbifold quantum cohomology, but not the orbifold cohomology.
Ages and degrees are as shown below:
\[
\begin{array}{r|c|c|c|}
\hline
\vphantom{\dfrac AA}
\age \one_{0} = 0
& \vert 1\vert=0  &   \vert p\vert=2  &   \vert p^2\vert=4  
\\
\hline
\vphantom{\dfrac AA}
\age \one_{\frac13} = 1
& & &  \vert \one_{\frac13}\vert=2
 \\
 \hline
 \vphantom{\dfrac AA}
\age \one_{\frac12} = 1
&  & \vert \one_{\frac12}\vert=2 & 
  \\
  \hline
  \vphantom{\dfrac AA}
\age \one_{\frac23} = 1
& & & \vert \one_{\frac23}\vert=2
 \\
 \hline
\end{array}
 \]
Finally, the pairing on $\calM$ is given by
$\lann P_i, P_j \rann=\tfrac16$ if $i+j=2$,
$\lann P_3, P_5 \rann= \lann P_5, P_3 \rann=\tfrac13$,
and
$\lann P_4, P_4 \rann=\tfrac12$
(with all other products zero).  
\qed
\end{example}

\begin{example}  $\P(1,1,3)$

In this case we have orbifold classes with fractional degrees. We just state the results, as the calculations are very similar to those in the previous example.   First, the data is
\[
\begin{array}{r|c|c|c|c}
\vphantom{\dfrac AA}
 &w_0=1 & w_1=1 & w_2=3
\\
\hline
\vphantom{\dfrac AA}
S_{f_1}=\{0,1,2\},\  f_1=0 
& \frac01  &  \frac01  &  \frac03  
&\De_1=\frac13, m_1=3
\\
\hline
\vphantom{\dfrac AA}
S_{f_2}=\{2\},\  f_2=\frac13 
& & & \frac13
&\De_2=\frac13, m_2=3
 \\
 \hline
 \vphantom{\dfrac AA}
S_{f_3}=\{2\}, \ f_3=\frac23
&  &  & \frac23
&\De_3=\frac13, m_3=3
  \\
 \hline
\end{array}
 \]
 \[
 {}
 \]
and we have
\[
q^{-1}T_w= 
q^{-1}3^3\h^5\b^3(\b-\tfrac13)(\b-\tfrac23)
=
3 q^{-\frac13} (\h\b)^1
3 q^{-\frac13} (\h\b)^1
3 q^{-\frac13} (\h\b)^{3}.
\]
The orbifold quantum multiplication table is
\renewcommand{\AA}{\tfrac16 q^{\tfrac {A^A}A} \one_{\tfrac2{A_A}}}
\[
\begin{array}{c|ccccc}
\vphantom{\AA}
 & \ \ 1\ \  & \ \ p\ \  & p^2 & \one_{\frac13}
  & \one_{\frac23}
\\
\hline
\vphantom{\AA}
1  & 1 & p & p^2 & \one_{\frac13}
 &  \one_{\frac23}
 \\
\vphantom{\AA}
p  &  & p^2 & \frac13 q^{\frac13} \one_{\frac13} & \frac13 q^{\frac13} \one_{\frac23} & \frac13 q^{\frac13}  
\\
\vphantom{\AA}
p^2  &  &  & \frac1{9} q^{\frac23} \one_{\frac23} & \frac19 q^{\frac23}  & \frac13 q^{\frac13}p  
\\
\vphantom{\AA}
\one_{\frac13}  &  &  &  & \frac13 q^{\frac13}p& 
p^2
\\
\vphantom{\AA}
\one_{\frac23}  &  &  &  & & 
\one_{\frac13}
\\
\end{array}
 \]
 where $1, p, p^2, \one_{\frac13}, \one_{\frac23}$
 correspond to $1, \h\b, (\h\b)^2, 
3q^{-\frac13} (\h\b)^3,
3q^{-\frac13} (\h\b)  3q^{-\frac13} (\h\b)^3$.
We have
\[
\begin{array}{r|c|c|c|}
\hline
\vphantom{\dfrac AA}
\age \one_{0} = 0
& \vert 1\vert=0  &   \vert p\vert=2  &   \vert p^2\vert=4  
\\
\hline
\vphantom{\dfrac AA}
\age \one_{\frac13} = \frac43
& & &  \vert \one_{\frac13}\vert=\frac83
  \\
  \hline
  \vphantom{\dfrac AA}
\age \one_{\frac23} = \frac23
& & & \vert \one_{\frac23}\vert=\frac43
 \\
 \hline
\end{array}
 \]
and the pairing  is given by
$\lann P_i, P_j \rann=\tfrac13$ if $i+j=2$,
$\lann P_3, P_4 \rann= \lann P_4, P_3 \rann=\tfrac13$
(with all other products zero). 
\qed
\end{example}

\section{Hypersurfaces in weighted projective space}\label{hyper}

Based on the toric approach to mirror symmetry, 
Corti and Golyshev conjectured that the orbifold quantum cohomology of a (quasismooth) hypersurface
\[
X^d\sub \PW
\]
of degree $d$ is governed by the differential operator 
\[
\h^\s \prod_{i=0}^n (w_i \b) (w_i \b - 1)\dots
(w_i \b - (w_i-1))\  - \ 
q\h^d  (d\b+1)\dots (d\b+(d-1))(d\b+d).
\]
(this operator appears in section 7.3 of \cite{GoXX} without the $\h$ factors; also in \cite{CoGoXX} for the 
Calabi-Yau case $\s=d$, where the $\h$ factors cancel out).  The method of \cite{CCLTXX} gives evidence for this conjecture in the Fano case, i.e.\  when $\s>d$. We shall always assume that $\s>d$, although our approach applies also when $\s=d$ (cf.\ section 6.7 of \cite{Gu08}).

Since $\b q=q(\b + 1)$, we have
\[
q\h^d  d^d (\b+\tfrac1d)\dots (\b+\tfrac{d-1}d)(\b+\tfrac{d}d)=
\h^d  d^d (\b-\tfrac{d-1}d)\dots (\b-\tfrac1d)(\b-\tfrac0d) q,
\]
which shows that both summands of
\[
w^w \h^\s 
\prod_{i=0}^n  
\b(\b-\tfrac{1}{w_i})\dots(\b - \tfrac{w_i-1}{w_i})
\ -\ 
q
\h^d  d^d (\b+\tfrac1d)\dots (\b+\tfrac{d-1}d)(\b+\tfrac{d}d)
\]
can be written with a factor of $\h\b$ on the left. Cancelling this factor, we obtain an operator of order $\s-1$ (in terms of $D^\h$-modules, we quotient out by the trivial $D^\h$-module $D^\h/(\h\b)$). We call\footnote{We do not write $T_w$ here; the abbreviation 
$T_w$ always means $T_{w_0,\dots,w_n}$.
}
this
operator
$T_{w_1,\dots,w_n}-qS_{d-1}$:
\[
\underbrace{
w^w \h^{\s-1}
\prod_{i=1}^n  
\b(\b-\tfrac{1}{w_i})\dots(\b - \tfrac{w_i-1}{w_i})
}_{T_{w_1,\dots,w_n} }
\ -\ 
q
\underbrace{
\vphantom{
\prod_{i=1}^n  
\b(\b-\tfrac{1}{w_i})\dots(\b - \tfrac{w_i-1}{w_i})
}
\h^{d-1}  d^d (\b+\tfrac1d)\dots (\b+\tfrac{d-1}d)
}_{S_{d-1} }
\]
Here we have assumed that $w_0=1$.  {\em To simplify notation, we shall also assume that $w_1,\dots,w_n$ are such that no further left-cancellations of the above type are possible.}  It follows that the $D^\h$-module
\[
\calM=D^\h/(T_{w_1,\dots,w_n}-qS_{d-1})
\]
is irreducible.  In the general case, an irreducible $D^\h$-module is obtained by left-cancelling all common factors (see \cite{GoXX}), and our method can
be applied to that.

Observe that the case $d=1$ gives $T_{w_1,\dots,w_n}-q$, which is the operator associated with $\P(w_1,\dots,w_n)$, as expected. The case $w_1=\cdots=w_n=1$ (hence $s=n+1$) gives $(\h\b)^{n}-qS_{d-1}$, which is the operator associated with a degree $d$ hypersurface in $\C P^{n}$, denoted by $M^d_{n+1}$ in \cite{Sa08}.

In this section, by extending the method of section \ref{direct},
we shall give a method to extract an \ll abstract orbifold quantum product\rrr.  We emphasize that this is a method whose success is not guaranteed.  Moreover, the (genuine) quantum product is not yet known for hypersurfaces, in general.  Nevertheless, we can give a nontrivial example (Example \ref{corti}) where the Gromov-Witten invariants have been proposed by Corti (\cite{Co08}), and our method is consistent with his results.   

As in section \ref{notation}, we define
\begin{align*}
F&=\{  \tfrac{i}{w_j} \st  0\le i \le w_j-1, \ \ 
1\le j\le n \}\\
&=\{  
f_1,\dots,f_k
\}.
\end{align*}
and denote by $u_1,\dots,u_k$  the multiplicities of $f_1,\dots,f_k$.  However,  $u_1=n$ here. We use the notation $\De_i, m_i$ as in section \ref{results}.  Thus, we have a factorization
\[
q^{-1}T_{w_1,\dots,w_n}=
\tfrac{1}{r_{\s-1}} \ \h\b\ 
\tfrac{1}{r_{\s-2}} \ \h\b\ 
\ \ \dots\ \ 
\tfrac{1}{r_1} \h\b\ 
\]
and we can introduce $P_0=1$ and
$P_i=\frac{1}{r_i} \h\b P_{i -1}$ for $1\le i \le \s-2$.  
The equivalence classes of the operators $P_0,\dots,P_{\s-2}$ form a basis of the $D^\h$-module $D^\h/(T_{w_1,\dots,w_n}-qS_{d-1})$. 

As in section \ref{direct}, the action of $\b$ defines a connection on the bundle whose space of sections is $\calM$.  However, when $d\ge 2$, the connection matrix $\Om$ with respect to the basis $P_0,\dots,P_{\s-2}$ is not of the form $\tfrac1\h \om$.  To achieve this form (which is the starting point for the construction of a product operation) we must construct a new basis.  

It will be convenient to construct such a basis in two steps.  

\noindent{\em Step 1}\ \   The method of \cite{Sa08} produces a basis
$\hat P_0,\dots,\hat P_{\s-2}$ with respect to which the connection matrix has the form $\tfrac1\h \hat\om$.   Let us review that method here. As in our discussion of the Birkhoff factorization method in section \ref{direct}, the new basis is given by 
$L_+^{-1}\cdot P_0, L_+^{-1}\cdot P_1,\dots, L_+^{-1}\cdot
P_{\s-2}$, for a certain \ll gauge transformation\rr
$L_+=Q_0(I + \h Q_1 + \cdots )$.  In contrast to the situation of section \ref{direct}, there is no short cut to finding $L_+$ here.  However, $L_+$ can be found as the unique solution of the ordinary differential equation
\[
\tfrac1\h \hat\om = L_+ \Om L_+^{-1} + L_+ dL_+^{-1}
\]
which is homogeneous and polynomial in $q^{1/l}$, and
which satisfies the initial condition $L_+\vert_{q^{1/l}=0}=I$.  As in \cite{Sa08} it can be proved that this
reduces to a system of algebraic equations for $Q_0,Q_1,\dots$ which can be solved by an explicit algorithm.   (In the situation of section \ref{direct} we would have $L_+=I$, as we are now starting from the basis given by $P_i=\tfrac{1}{r_i}\h\b\cdots\tfrac{1}{r_1}\h\b$. The $L_+$ of section \ref{direct} 
converts $1,\h\b,\dots,(\h\b)^{s-2}$ to $P_0,\dots,P_{s-2}$, which could have been described as Step 0.)

\begin{example}  $X^3\sub\P(1,1,1,1,1)=\C P^4$

In the notation of \cite{Sa08} this is $M^3_5$. As this example is worked out in detail in Examples 3.6, 5.4, 6.24, 6.36 of \cite{Gu08} we shall just summarize the results of the calculations.

First, we have the differential operator
\[
q^{-1}T_{1,1,1,1} - S_2 =
q^{-1} (\h\b)^4 - 3^3 \h^2 (\b+\tfrac13)(\b + \tfrac23).
\]
With respect to the basis $P_0=1, P_1=\h\b, P_2=(\h\b)^2, P_3=(\h\b)^3$ the connection matrix is
\[
\Om=
\tfrac1\h
\begin{pmatrix} 
 & & & 6q\h^2 \\
 1 & & & 27q\h \\
  & 1 & & 27q \\
   & & 1 & 
\end{pmatrix}.
\]
The gauge transformation $L_+=Q_0(I+\h Q_1)$  can be found by solving the o.d.e.\
$
\tfrac1\h \hat\om = L_+ \Om L_+^{-1} + L_+ dL_+^{-1}
$
subject to $L_+\vert_{q=0}=I$.  This gives
\[
Q_0=
\begin{pmatrix} 
\, 1\,  & &6q &  \\
  & \,1\,& & 21q \\
  &  & 1&  \\
   & &  & 1
\end{pmatrix},
\quad
Q_1=
\begin{pmatrix} 
\ \  & \ \ & \ \ &  6q\\
  & & &  \\
  &  & &  \\
   & &  & 
\end{pmatrix}.
\]
The new basis is $\hat P_0=1, \hat P_1= \h\b, \hat P_2= (\h\b)^2-6q,
\hat P_3= (\h\b)^3 - 21q\h\b - 6\h q$, and the matrix of the connection form is
\[
\hat\Om=
\tfrac1\h
\begin{pmatrix} 
 &6q & & 36q^2 \\
 1 & &15q & \\
  & 1 & & 6q \\
   & & 1 & 
\end{pmatrix}
\]
with respect to this basis. 
\qed
\end{example}

The basis $\hat P_0,\dots,\hat P_{\s-2}$ allows us to construct a product operation as in section \ref{direct}.  Unfortunately, this product does not necessarily satisfy the Frobenius property. In general, therefore, it is necessary to modify the basis further, and this will be Step 2.  As preparation for this, we begin with a brief review of the Birkhoff decomposition.  

\noindent{\em Motivation for Step 2}\ \   The Birkhoff decomposition (Theorem 8.1.2 of \cite{PrSe86}) can be written
\[
\La GL_{\s-1}\C =
\bigcup_{\ga\in\check T}
\La_- GL_{\s-1}\C \ \ga\ \La_+ GL_{\s-1}\C,
\]
where $\check T$ denotes the set of homomorphisms from $S^1$ to the diagonal matrices in $ GL_{\s-1}\C$. If $\ga(\h)=\diag(\h^{a_0},\dots,\h^{a_{s-2}})$
is restricted to the set of homomorphisms satisfying $a_0\le \cdots\le a_{\s-2}$, then the decomposition is a disjoint union. The \ll big cell\rr is the piece given by $\ga=I$; it is a dense open subset of the identity component of $\La GL_{\s-1}\C$. The \ll small cells\rr (where $\ga\ne I$) have finite codimension in 
$\La GL_{\s-1}\C$.  

The term \ll cell\rr is used here because the decomposition is equivalent to the $\La_- GL_{\s-1}\C$-orbit decomposition
\[
\La GL_{\s-1}\C/ \La_+ GL_{\s-1}\C =
\bigcup_{\ga\in\check T}
\La_- GL_{\s-1}\C \ [\ga]
\]
of the Grassmannian
$Gr^{(\s-1)} \cong 
\La GL_{s-1}\C/ \La_+ GL_{\s-1}\C$ (see section 8.3 of \cite{PrSe86}). It is analogous to the cell decomposition, or cell-bundle\footnote{The cell decompositions here arise from Morse functions; the cell-bundle decompositions arise from Morse-Bott functions.}
 decomposition, of a finite-dimensional generalized flag manifold given by the orbits of a parabolic subgroup.
 The main point is that a \ll small cell\rr $\La_- GL_{\s-1}\C \ [\ga]$ is diffeomorphic to a proper unipotent subgroup $\La^\ga_-$ of
$\La_- GL_{\s-1}\C$ (Theorem 8.6.3 of \cite{PrSe86}). 
This shows that any map $L$ which takes values in 
$\La_- GL_{\s-1}\C \ \ga\ \La_+ GL_{\s-1}\C$ (and therefore admits at least one factorization $L=L_-\ga L_+$) has a {\em most economical} factorization
\[
L=L^c_-\ \ga\  L^c_+.
\]
The same phenomenon occurs for finite-dimensional generalized flag manifolds. The simplest example is $\C P^n$: the $i$-dimensional cell $\C^i$ can be described as an orbit of the 
$\tfrac12(n+1)(n+2)$-dimensional group of upper triangular matrices in $GL_{n+1}\C$, but most economically as an orbit of a certain $i$-dimensional unipotent subgroup (see chapter 14, part III, of 
\cite{Gu97}). 

Step 2 will amount to extracting the economical factor $L^c_-$ from $L_-\ga$.  More precisely, by Theorem 8.6.3 of \cite{PrSe86}, we can write
\[
L=L_-\ga L_+
= L^c_-L^f_- \ga L_+
=L^c_-\ga L^f_+  L_+
\]
where $L^f_-$ denotes the \ll superfluous factor\rrr; this is a polynomial in $\h^{-1}$ and satisfies
$L^f_- \ga = \ga L^f_+$ where $L^f_+$ is polynomial in $\h$. 
Thus, Step 1 uses the gauge transformation $L_+^{-1}$ to convert $P_0,\dots,P_{\s-2}$ to a provisional basis $\hat P_0,\dots,\hat P_{\s-2}$, then Step 2 uses a further gauge transformation $(\ga L_+^f)^{-1}$ to convert $\hat P_0,\dots,\hat P_{\s-2}$ to the desired basis $\tilde P_0,\dots,\tilde P_{\s-2}$.

\noindent{\em Step 2}\ \    As in Proposition \ref{sa}, it can be proved that 
\[
(T_{w_1,\dots,w_n}-qS_{d-1})\odot\de_{n-1}=0.
\]
However the map 
\[
\calM\to \bar{\calM}^\ast,\quad
[P]\mapsto [P\odot\de_{n-1}]
\]
is not in general an isomorphism
of $D^\h$-modules, and it is at this point that we need the homomorphism $\ga$.  Let us  assume that

\noindent (H1) there exist integers $a_0\le \dots\le a_{s-2}$ with the property that 
$
\h^{-a_0} \hat P_0 \odot \de_{n-1},\dots,
\h^{-a_{s-2}} \hat P_{s-2} \odot \de_{n-1}
$
have {\em minus} the weighted degrees of the elements
$\h^{-a_0}\hat P_0,\dots,\h^{-a_{s-2}}\hat P_{s-2}$ (not necessarily in the same order).  

\noindent As in Definition \ref{pairing}, we can define a pairing by
\[
\lann P , Q\rann=
\tfrac d{w_1\dots w_n}(P\odot\de_{n-1})(Q).
\]
The normalization of section \ref{direct} is modified by the factor $d$ here, to take account of the degree of the hypersurface.
We shall assume further that

\noindent (H2) there exists a basis with respect to which the matrix of $\lann\ ,\ \rann$ is a nondegenerate symmetric matrix independent of $q,\hbar$.

\noindent That is, the pairing $\lann\ ,\ \rann$ extends a nondegenerate symmetric $\C$-linear pairing on a complex vector space of dimension $s-1$.
Finally (from the motivation above) we seek a map $G=L_-^f\ga = \ga L_+^f$ such that the gauge transformation $G^{-1}$ converts $\hat\Om$ to a connection form $\tilde\Om=\tfrac1\h \tilde\om$ where $\tilde\om$ is independent of $\h$. This $\tilde\om$ is the connection matrix with respect to the basis $\tilde P_0=G^{-1}\cdot \hat P_0,\dots,\tilde P_{s-2}=G^{-1}\cdot \hat P_{s-2}$. Such a $G$ necessarily satisfies

\noindent $(\ast)$
\quad\quad\quad\quad\quad\quad\quad\quad\quad
$\tfrac1\h \tilde\om = G \tfrac1\h \hat\om G^{-1} + G dG^{-1}$

\noindent and we shall assume that

\noindent (H3) there exists in some neighbourhood of $q^{1/l}=0$ a solution $G=L_-^f\ga$ of the differential equation $(\ast)$, where $\ga(\hbar)=\diag(\hbar^{a_0},\dots,\hbar^{a_{s-2}})$.

\noindent  We discuss suitable normalizations of such solutions (initial conditions) later.

Assuming (H1)-(H3) (which we shall verify in our main example), we can attempt to define a product operation as in section \ref{direct}. 
Let $A$ be the vector space with basis denoted by the symbols
\begin{gather*}
1,\  
p,\ 
\dots,\ 
p^{u_1-1}
;
\\
\one_{f_2},\ \  
\one_{f_2} p,\ \ 
\dots,\ \ 
\one_{f_2} p^{u_2-1};
\\
\vdots\\
 \one_{f_k},\ \
 \one_{f_k} p,\ \ 
 \dots, \ \
 \one_{f_k} p^{u_k -1}
\end{gather*}
We define $QA$ to be $A\otimes\C[q^{\pm 1/l}]$, and we define a $\C[p,q^{\pm 1/l}]$-module action on $QA$ by specifying that the matrix of multiplication by $p$ is $\tilde\om$.  As we shall see, in contrast to the situation of the previous section, this $\C[p,q^{\pm 1/l}]$-module action does not in general allow us to obtain a product structure on $QA$, because the action of $p$ is not necessarily cyclic.

\begin{example}  $X^3\sub\P(1,1,1,2)$\label{corti}

We have $w_0=w_1=w_2=1$, $w_3=2$ and $\s=5$, $d=3$.
The differential operator is
\[
q^{-1} T_{1,1,2} - S_2= 
q^{-1}2^2\h^4\b^3(\b-\tfrac12)  -
3^3\h^2(\b+\tfrac13)(\b+\tfrac23).
\]
We have
$F=\{\frac01, \frac01, \frac02, \frac12 \}
=
\{ 0,  \frac12\}$,   so
$u_1=3, u_2=1$.  As in section \ref{direct} we can display the data as follows:
\[
\begin{array}{r|c|c|c|c}
\vphantom{\dfrac AA}
 &w_1=1 & w_2=1 & w_3=2
\\
\hline
\vphantom{\dfrac AA}
S_{f_1}=\{1,2,3\},\  f_1=0 
& \frac01  &  \frac01  &  \frac02  
&\De_1=\frac12, m_1=2
\\
\hline
\vphantom{\dfrac AA}
S_{f_2}=\{3\},\  f_2=\frac12
& & & \frac12
&\De_2=\frac12, m_2=2
 \\
 \hline
\end{array}
 \]
 \[
 {}
 \]
The factorization of $q^{-1}T_{1,1,2}$  is
 \[
q^{-1}T_{1,1,2}=
2 q^{-\frac12} (\h\b)
2 q^{-\frac12} (\h\b)^{3}
=
\tfrac1r (\h\b)
\tfrac1r (\h\b)^{3},
\]
where $r=\frac12 q^{\frac12}$.
Thus, our starting point is the basis
\[
P_0=1,\ P_1=\h\b,\ P_2=(\h\b)^2,\ P_3=\frac1r(\h\b)^3.  
\]
We have $\vert r\vert=2$, so the degrees of these basis elements are $0,2,4,4$. With respect to this basis we have
\[
\Om = \tfrac1\h
\begin{pmatrix}
 & & & 6\h^2 r \\
1 & & & 27\h r\\
 & 1 & & 27 r\\
  & & r & 
\end{pmatrix}.
\]

\noindent{\em Step 1}\ \   The gauge transformation $L_+^{-1}$
is given by
\[
L_+=
\begin{pmatrix}
1 & & 12r^2 & \\
 & 1 & & 30r \\
  & & 1 & \\
\hphantom{q^{\frac12}}  & 
\hphantom{q^{\frac12}}& 
\hphantom{q^{\frac12}} & 1
   \end{pmatrix}
   \left(
   I +
   \h
   \begin{pmatrix}
    & & & 12r\\
     & & & \\
         & & & \\
\hphantom{q^{\frac12}}&
\hphantom{q^{\frac12}} &
\hphantom{q^{\frac12}} & 
             \end{pmatrix}
   \right).
\]
Application of $L_+^{-1}$
produces the new basis
\[
\hat P_0=P_0,\  \hat P_1=P_1,\ 
\hat P_2=P_2-12r^2P_0,\  
\hat P_3=P_3-30rP_1-12\h rP_0.
\]
With respect to this basis, we have
\[
\hat\Om = 
\tfrac1\h\hat\om=
\tfrac1\h
\begin{pmatrix}
 & 12r^2 & & -36r^3 \\
1 & & 18r^2 & \\
 & 1 & & -3r\\
  & & r & 
\end{pmatrix}.
\]
We omit the details of this calculation, which is similar to those in \cite{Sa08}.

\noindent{\em Step 2}\ \    We have to verify (H1)-(H3).  
For $\ga$ we take $\ga(\h)=(1,1,1,\h)$.  
The degrees of 
$\ga^{-1}\hat P_0, \ga^{-1}\hat P_1,
\ga^{-1}\hat P_2, \ga^{-1}\hat P_3$ are $0,2,4,2$, and the degrees of 
$(\ga^{-1}\hat P_0)\odot\de_2, (\ga^{-1}\hat P_1)\odot\de_2,
(\ga^{-1}\hat P_2)\odot\de_2, (\ga^{-1}\hat P_3)\odot\de_2$ are
$-4,-2,0,-2$, so (H1) is satisfied.  

To verify (H3), we note that $G^{-1}$ must be of the form
\[
G^{-1}=\ga^{-1}Z=\ga^{-1}(Z_0+\tfrac1\h Z_1+\tfrac1{\h^2} Z_2)
\]
where $Z$ is homogeneous with respect to a basis with degrees $0,2,4,2$,  i.e.\ the entries of the matrix function $Z$ have the degrees shown below:
\[
\begin{matrix}
\vphantom{\boxed{A^{A^A}}}
\boxed{\hphantom{-}0} & \boxed{\hphantom{-}2} & \boxed{\hphantom{-}4} & \boxed{\hphantom{-}2} \\
\vphantom{\boxed{A^{A^A}}}
\boxed{-2} & \boxed{\hphantom{-}0} & \boxed{\hphantom{-}2} & \boxed{\hphantom{-}0} \\
\vphantom{\boxed{A^{A^A}}}
\boxed{-4} & \boxed{-2} & \boxed{\hphantom{-}0} & \boxed{-2} \\
\vphantom{\boxed{A^{A^A}}}
\boxed{-2} & \boxed{\hphantom{-}0} & \boxed{\hphantom{-}2} & \boxed{\hphantom{-}0} 
\end{matrix}
\]
Equating the coefficients of each power of $\h$ in the above
differential equation $(\ast)$ gives a collection of equations for the coefficients of $Z_0,Z_1,Z_2$ and $\tilde\om$.  With the initial condition
$Z\vert_{r=0}=I$, the unique solution is
\[
Z=
\begin{pmatrix}
\ 1\  & & & \\
 & \ 1\  & & \\
  & & \ 1\  & \\
   & & -2r & 1
\end{pmatrix}
+
\tfrac1\h
\begin{pmatrix}
 & & & \\
\ \  & \ \  &-6r^2 &3r \\
  & &  & \\
   & &  & 
\end{pmatrix}
\]
More generally, the initial condition
$Z\vert_{q=0}=\diag(1,1,1,y)$ 
leads to the solution
\[
Z=
\begin{pmatrix}
\ 1\  & & & \\
 & \ 1\  & & \\
  & & \ 1\  & \\
   & & -2r & y
\end{pmatrix}
+
\tfrac1\h
\begin{pmatrix}
 & & & \\
\ \  & \ \  &-6r^2 &3ry \\
  & &  & \\
   & &  & 
\end{pmatrix}.
\]
The new basis ($\tilde P_i=G^{-1}\cdot \hat P_i$)
produced by Step 2 is, therefore,
\[
\tilde P_0=1,\ 
\tilde P_1=\h\b,\ 
\tilde P_2=\hat P_2 - \tfrac{2r}\h \hat P_3 - \tfrac{6r^2}\h\hat P_1,\  
\tilde P_3=\tfrac y{\h} \hat P_3 + \tfrac{3ry}\h \hat P_1.
\]
The connection matrix with respect to this basis is
\[
\tilde\Om=
\tfrac1\h\tilde\om=
\tfrac1\h
\begin{pmatrix}
 & 12r^2 & & \\
 1 & & 12r^2 & \frac{3yr}{2} \\
  & 1 & & \\
   & \frac{2r}y & &
   \end{pmatrix}.
\]
Finally we verify condition (H2) by explicit calculation of
 $\lann\ ,\ \rann$
 with respect to the new basis:
\[
\left(\lann \tilde P_i, \tilde P_j \rann\right)_{0\le \al,\be \le 3}=
\begin{pmatrix}
 & & \tfrac32  & \\
  &  \tfrac32  &  & \\
\tfrac32  & & & \\
& & &  \tfrac98 y^2
\end{pmatrix}
\quad(=S, \ \text{say}).
\]
Regarding the normalization of the solution, we achieve the analogue
\[
\begin{pmatrix}
 & & \!\frac{d}{m_1}\!  & \\
  & \!\frac{d}{m_1}\!   &  & \\
\!\frac{d}{m_1}\!   & & & \\
& & & \!\frac{1}{m_2}\! 
\end{pmatrix}
\quad=
\begin{pmatrix}
 & & \frac{3}{2}  & \\
  & \frac{3}{2}  &  & \\
\frac{3}{2}   & & & \\
& & & \frac{1}{2} 
\end{pmatrix}
\]
of the Ansatz for $Q_0(I + \h Q_1 + \cdots )$ in section \ref{direct} if we take $y=\tfrac23$.  

This allows us to define an action of $p$  (abstract orbifold quantum multiplication by $p$) on
$A\otimes\C[r^{\pm 1}]$, where $A$ is the vector space whose $\C$-basis 
vectors are denoted by $1,p,p^2,\one_{\frac12}$.  
The matrix of the action with respect to this basis is, by definition,
the matrix $\tilde \om$.   As in section \ref{direct}, we may also introduce a grading 
by defining 
$\age \one_{f_i}=
\tfrac12 \vert \tilde P_{u_1+\cdots+ u_{i-1}}\vert$, and this gives:
\[
\begin{array}{r|c|c|c|}
\hline
\vphantom{\dfrac AA}
\age \one_{0} = 0
& \vert 1\vert=0  &   \vert p\vert=2  &   \vert p^2\vert=4  
\\
\hline
\vphantom{\dfrac AA}
\age \one_{\frac12} = 1
& & &  \vert \one_{\frac12}\vert=2
  \\
 \hline
\end{array}
 \]
 The action is compatible with this grading (i.e.\ the action of $p$ increases degree by $2$).
 
We also have the pairing $(\ ,\ )$ whose matrix is $S$.   The Frobenius condition 
$(p\circ a,b)=(a,p\circ b)$ (for any $a,b\in A$) is satisfied by construction (see the discussion following Definition 6.14 of \cite{Gu08}); in matrix terms this is $S^{-1}\tilde\om^t S=\tilde\om$.   We remark that this holds for any value of $y$, not just $y=\tfrac23$.

The module action reproduces the first two rows of the following table of orbifold quantum products obtained by Corti  (\cite{Co08}):
\renewcommand{\AA}{\tfrac16 q^{\tfrac {A^A}A} \one_{\tfrac2{A_A}}}
\[
\begin{array}{c|ccccc}
\vphantom{\AA}
 & \ \ 1\ \  & \ \ p\ \  &\ \  p^2\ \  & \ \ \one_{\frac12}\ \ 
\\
\hline
\vphantom{\AA}
1  & 1 & p & p^2 & \one_{\frac12}
 \\
\vphantom{\AA}
p  &  & p^2+12r^2+3r \one_{\frac12}  & 12r^2p & r p
\\
\vphantom{\AA}
p^2  &  &  & 108r^4+36r^3\one_{\frac12} & 12r^3
\\
\vphantom{\AA}
\one_{\frac12}  &  &  &  & \tfrac13 p^2 - 3r\one_{\frac12}
\\
\end{array}
 \]
Furthermore, $S$ agrees with the matrix of the orbifold Poincar\'e pairing from \cite{Co08}.

If it is assumed that the module action extends to a commutative associative abstract quantum product operation which satisfies the Frobenius condition 
$(c\circ a,b)=(a,c\circ b)$ (for any $a,b,c\in A$), then it follows from the first two rows of the table that
\begin{align*}
p^2\, \circ p^2\;  &= 108r^4+36r^3\one_{\frac12} +xr^3(r-\one_{\frac12})
\\
p^2\,  \circ \one_{\frac12} &= 12r^3 -\tfrac x3r^2 (r-\one_{\frac12})
\\
\one_{\frac12} \circ \one_{\frac12} &= \tfrac13 p^2 + \tfrac x9 r(r-\one_{\frac12})
\end{align*}
for some real scalar $x$.  It follows that
$(\one_{\frac12} \circ \one_{\frac12}, \one_{\frac12})=-\tfrac32r
+\tfrac x9 r (r-\one_{\frac12})$.
The condition $x=0$ is equivalent to 
\[
(\one_{\frac12} \circ \one_{\frac12}, \one_{\frac12})=-\tfrac32r.
\]
and Corti computed this as a Gromov-Witten invariant.  The ambiguity involving
$r-\one_{\frac12}$ is unavoidable in our construction as the second row of the table  already tells us that 
$p\circ(r-\one_{\frac12})=0$.
However, for any $x$ we do obtain an abstract quantum product operation which satisfies the Frobenius condition.
\qed
\end{example}
 
Returning to the general theory, let us mention an alternative interpretation of our method, which 
explains informally our assumptions (H1)-(H3).  The significance of (H1) is that it is a necessary condition for the natural pairing $\lann\ ,\ \rann$ to be \ll flat\rrr, i.e.\ for condition (H2).  Having such a flat pairing is, in turn, a necessary condition for being able to carry out the Gram-Schmidt orthonormalization procedure, which is what (H3) accomplishes.   From the the Birkhoff factorization point of view, our method utilizes $L_+$ rather than $L_-$, as we have already mentioned; more accurately,  it utilizes the transformation \ll$P_i\mapsto L_+^{-1}\cdot P_i$\rr
in the D-module, which is essentially the Gram-Schmidt process. 

It may appear at first sight that this could be done in many inequivalent ways.  
However (with suitable initial conditions, as in Example \ref{corti}), the final basis
$\tilde P_0, \dots , \tilde P_{s-2}$ is unique, and this may be explained as follows.
Step 2 involves a Birkhoff factorization of the form \ll$L=L_-\ga L_+$\rrr.  The Frobenius property is satisfied if and only if $L_-$ is a twisted loop with respect to the involution defined by $S$, i.e.\ $S^{-1}(L_-^t)^{-1}S=L_-(-\h)$ (section 6.5 of \cite{Gu08}).  Now, if there exists {\em some} twisted $L$, for example, from any Gram-Schmidt orthonormalization, and $\ga$ is twisted, then $L_-$ must also be twisted, as the Birkhoff decomposition is valid also for the twisted loop group.  By the uniqueness of the (normalized)  Birkhoff decomposition, we always obtain the same $L_-$.   Thus, any Gram-Schmidt orthonormalization followed by a Birkhoff factorization produces the same
$\tilde P_0, \dots , \tilde P_{s-2}$.

Thus, the role of the loop $\ga$ is to compensate for the non-flatness of the pairing 
$\lann\ ,\ \rann$.   It does this by modifying the original $D^\h$-module $\calM$ (with basis $P_0,\dots,P_{s-2}$) to a new $D^\h$-module 
with basis $\tilde P_0,\dots,\tilde P_{s-2}$, which is a submodule of
$\calM\otimes_{\C[\h]} \C[\h,\h^{-1}]$.  This phenomenon is related to the failure of the action of $p$ to be cyclic, in the hypersurface case.  We thank Hiroshi Iritani for emphasizing to us the significance of this, cf.\ \cite{IrXX}.

{\em
\noindent
Department of Mathematics\newline
Faculty of Science and Engineering\newline
Waseda University\newline
3-4-1 Okubo, Shinjuku, Tokyo 169-8555\newline
JAPAN
   
\noindent
E-mail: martin@waseda.jp

\noindent
Mathematisches Institut
\newline
WWU M\"unster
\newline
Einsteinstrasse 62
\newline
48149 M\"unster
\newline
GERMANY

\noindent
E-mail: sakai@blueskyproject.net
}


\begin{thebibliography}{99}

\bibitem{AmGu05}
A.~Amarzaya and M.~A.~Guest,
\emph{Gromov-Witten invariants of flag manifolds, via D-modules},
J. London Math. Soc.
\textbf{72}
(2005),
121--136
(math.DG/0306372).

\bibitem{BoMaPeXX}
S.~Boissi\`ere, \`E.~Mann, and F.~Perroni,
\emph{The cohomological crepant resolution conjecture for $\Bbb P(1,3,4,4)$},
Internat. J. Math.
\textbf{20}
(2009),
 791--801
(math.AG/0610617).

\bibitem{CCLTXX}
T.~Coates, A.~Corti, Y.-P.~Lee, and H.-H.~Tseng,
\emph{The quantum orbifold cohomology of weighted projective spaces},
Acta Math.
\textbf{202}
(2009),  
139--193
(math.AG/0608481).

\bibitem{Co08}
A.~Corti,
Lecture at UK-Japan Winter School, Warwick University, January 2008.

\bibitem{CoGoXX}
A.~Corti and V.~Golyshev,
\emph{Hypergeometric equations and weighted projective spaces},
Sci. China Math.
\textbf{54}
(2011),  
1577--1590
(math.AG/0607016).

\bibitem{GoXX}
V.~Golyshev,
\emph{Classification problems and mirror duality},
Surveys in geometry and number theory,
London Mathematical Society Lecture Notes 338,
eds. N.~Young et al.,
Cambridge,
2007,
pp.~88-121
(math.AG/0510287).

\bibitem{Gu97}
M.~A.~Guest,
\emph{Harmonic Maps, Loop Groups, and Integrable Systems},
LMS Student Texts 38,
Cambridge Univ. Press,
1997.

\bibitem{Gu05}
M.~A.~Guest,
\emph{Quantum cohomology via D-modules},
Topology 
\textbf{44}
(2005)
263--281
(math.DG/0206212).

\bibitem{Gu08}
M.~A.~Guest,
\emph{From Quantum Cohomology to Integrable Systems},
Oxford Graduate Texts in Math. 15,
Oxford Univ. Press,
2008.

\bibitem{Ir06}
H.~Iritani,
\emph{Quantum $D$-modules and equivariant Floer theory for free loop spaces},
Math. Z.
\textbf{252}
(2006),
577--622
(math.DG/0410487).

\bibitem{IrXX}
H.~Iritani,
\emph{Quantum cohomology and periods},
preprint,
arXiv:1101.4512.

\bibitem{Ka73}
T.~Kawasaki,
\emph{Cohomology of twisted projective spaces and lens complexes},
Math. Ann.
\textbf{206}
(1973),
243--248.

\bibitem{MaXX}
\`E.~Mann,
\emph{Orbifold quantum cohomology of weighted projective spaces},
J. Algebr. Geom. 
\textbf{17} (2008), 137--166
(math.AG/0610965).

\bibitem{PrSe86}
A.~N.~Pressley and G.~B.~Segal,
\emph{Loop Groups},
Oxford Univ. Press,
1986.

\bibitem{Sa08}
H.~Sakai,
\emph{Gromov-Witten invariants of Fano hypersurfaces, revisited},
J. Geom. Phys.
\textbf{58} (2008), 654--669
(math.DG/0602324). 

\end{thebibliography}
\end{document}